\documentclass[oneside,english]{amsart}
\usepackage[T1]{fontenc}
\usepackage[latin9]{inputenc}
\usepackage{geometry}
\geometry{verbose}
\setlength{\parskip}{\medskipamount}
\setlength{\parindent}{0pt}
\usepackage{amsthm}
\usepackage{amssymb}

\makeatletter
\numberwithin{equation}{section}
\numberwithin{figure}{section}
\theoremstyle{plain}
\newtheorem{thm}{\protect\theoremname}
  \theoremstyle{plain}
  \newtheorem{lem}[thm]{\protect\lemmaname}
  \theoremstyle{remark}
  \newtheorem*{rem*}{\protect\remarkname}
  \theoremstyle{plain}
  \newtheorem{cor}[thm]{\protect\corollaryname}
  \theoremstyle{plain}
  \newtheorem{conjecture}[thm]{\protect\conjecturename}
  \theoremstyle{plain}
  \newtheorem{prop}[thm]{\protect\propositionname}

\makeatother

\usepackage{babel}
  \providecommand{\conjecturename}{Conjecture}
  \providecommand{\corollaryname}{Corollary}
  \providecommand{\lemmaname}{Lemma}
  \providecommand{\propositionname}{Proposition}
  \providecommand{\remarkname}{Remark}
\providecommand{\theoremname}{Theorem}

\begin{document}

\title{A remark on a conjecture of Buzzard-Gee and the Cohomology of Shimura
Varieties }

\author{Christian Johansson}
\address{Department of Mathematics, Imperial College London, London SW7 2AZ, UK}
\email{h.johansson09@imperial.ac.uk}

\begin{abstract}

We compare the conjecture of Buzzard-Gee on the association of Galois representations to $C$-algebraic automorphic representations with the conjectural description of the cohomology of Shimura varieties due to Kottwitz, and the reciprocity law at infinity due to Arthur. This is done by extending Langlands's representation of the $L$-group associated with a Shimura datum to a representation of the $C$-group of Buzzard-Gee. The approach offers an explanation of the explicit Tate twist appearing in Kottwitz's description.

\end{abstract}

\maketitle

\section*{Introduction}

A central theme in modern algebraic number theory is the connection
between automorphic forms and Galois representations. This link has
its origins in class field theory, and evolved from the work of Weil,
Taniyama, Shimura, Eichler, Serre and Deligne amongst others before
its scope was vastly expanded by Langlands in the late 1960's and
70's, linking the emerging theory of automorphic representations to
Grothendieck's conjectural theory of motives, where Galois representations
appear as realizations. For automorphic representations of ${\rm GL}_{n}$,
the first precise conjectures were given by Clozel in \cite{Clo}
where he defined a notion of algebraicity and conjectured that there
should be motives (hence Galois representations) associated with algebraic
automorphic representations.

In \cite{BG1}, Buzzard and Gee generalize the notion of algebraicity
and (a weak form of) Clozel's conjectures to an arbitrary connected
reductive group $G$ over a number field $F$. Perhaps somewhat surprisingly,
they define two notions of algebraicity. On the one hand they define
a notion of $L$-algebraicity, and given an $L$-algebraic automorphic
representation $\pi$ of $G$ they conjecture the existence of an
$\ell$-adic Galois representation 
\[
\rho_{\pi}^{L}\,:\, Gal(\overline{F}/F)\rightarrow{^{L}G(\overline{\mathbb{Q}}_{\ell}})
\]
They also define a notion of $C$-algebraicity which generalizes Clozel's
definition of algebraicity. Buzzard and Gee (following an idea of
Deligne) define a canonical $\mathbb{G}_{m}$-extension $\widetilde{G}$
of $G$ and given a $C$-algebraic automorphic representation $\pi$
of $G$, they canonically construct an $L$-algebraic automorphic
representation $\widetilde{\pi}$ of $\widetilde{G}$ and thus conjecturally
get an associated Galois representation 
\[
\rho_{\pi}=\rho_{\widetilde{\pi}}^{L}\,:\, Gal(\overline{F}/F)\rightarrow{^{L}\widetilde{G}(\overline{\mathbb{Q}}_{\ell})}={^{C}G(\overline{\mathbb{Q}}_{\ell}})
\]
Since the beginnings of the theory, an important test case for any
conjecture on the association of Galois representation to automorphic
representations has been the case of Shimura varieties, starting with
the modular curve. Langlands initiated a program to compute the ($\ell$-adic)
cohomology of arbitrary Shimura varieties (with coefficients in certain
local systems) and later on Kottwitz gave a very precise conjectural
description of these cohomology groups (\cite{Kot1}). The automorphic
representations that contribute to this description are called cohomological
and they are the one of most important classes of $C$-algebraic automorphic
representations.

In this short note we wish to test the Buzzard-Gee conjecture on the
association of Galois representations with $C$-algebraic automorphic
forms in the case of cohomological automorphic representations on
groups admitting a Shimura variety. Namely, given a cohomological
automorphic representation $\pi$, we show that Kottwitz's description
implies (or rather is equivalent to) that the composition of $\rho_{\pi}$
with a certain fixed algebraic representation $r_{C}$ of $^{C}G$
occurs in the cohomology (for the precise statement, see Conjecture
\ref{Conj, modified}). The representation $r_{C}$ is an extension
to $^{C}G$ of a representation of $^{L}G$ (originally defined by
Langlands in \cite{Lan1}) that is used in Kottwitz's description.
Our minor reformulation has the slight advantage that it explains
(or perhaps internalizes) a somewhat mysterious Tate twist by half
the dimension of the Shimura variety that occurs in Kottwitz's description. 

The structure of this note is as follows: In \S\ref{sec: 1} we define
the representation $r_{C}$ and in \S\ref{sec: 2} we prove Corollary
\ref{cor: main} which is the main result needed to relate the Buzzard-Gee
conjecture and Kottwitz's description of the cohomology of Shimura
varieties. This uses only some basic Lie theory and the definition
of a Shimura datum. Finally in \S\ref{sec:Comparison-with-Kottwitz}
we recall Kottwitz's description together with a related result of
Arthur (\cite{Art1}) and make the comparison.

Finally, the author would like to thank his PhD supervisors Kevin Buzzard and Toby Gee for asking him the question 
treated in this paper and for valuable comments on the first draft, Jack Shotton for comments on a later draft and Alexander Paulin for a useful conversation. He also wishes to thank the Engineering and Physical Sciences Research Council for supporting him throughout his doctoral studies. Moreover, it is a pleasure to
thank the Fields Institute, where part of the work for this paper was done, for their support and hospitality as well
as for the excellent working conditions provided.

\section{\label{sec: 1}The representation $r_{C}$ }

In this section we will set up some notation, recall the notion of
a $C$-group from \cite{BG1} and define a representation $r_{C}$
which will be an extension of a representation of Langlands that we
will denote $r_{L}$ (whose definition we will recall below). For simplicity,
all our dual groups and $L$-groups will be complex algebraic groups
and complex pro-algebraic groups, respectively, and will be identified
with their $\mathbb{C}$-points. In \S\ref{sec:Comparison-with-Kottwitz}
we will fix an isomorphism $\iota\,:\,\mathbb{C}\cong\overline{\mathbb{Q}}_{\ell}$,
where $\ell$ is a fixed rational prime. Given a connective reductive
group $G$ over a field $F$ with $L$-group $^{L}G=Gal(\overline{F}/F)\ltimes\widehat{G}$
(we will always use the Galois form of the $L$-group),
we may use $\iota$ to consider the associated pro-algebraic group
$^{L}G(\overline{\mathbb{Q}}_{\ell})$ over $\overline{\mathbb{Q}}_{\ell}$,
identified with its $\overline{\mathbb{Q}}_{\ell}$-points, and by
abuse of notation we will denote by $\iota$ the map $^{L}G\rightarrow{^{L}G(\overline{\mathbb{Q}}_{\ell}})$
induced by $\iota$. 

Let $G$ be a connected reductive group over $\mathbb{Q}$ and let
$X$ be a $G(\mathbb{R})$-conjugacy class of homomorphisms 
\[
h\,:\,\mathbb{S}\rightarrow G_{\mathbb{R}}
\]
of real algebraic groups, where $\mathbb{S}$ denotes Deligne's torus
${\rm Res}_{\mathbb{R}}^{\mathbb{C}}{\rm GL}_{1/\mathbb{C}}$. The
pair $(G,X)$ is required to satisfy Deligne's axioms for a Shimura
datum (see \cite{Del1} \S 2.1.1). We will let $E$ denote the reflex field of $(G,X)$
and we denote by $d$ the complex dimension of $X$. Given $h\in X$, we let $\mu_{h}$ denote the corresponding cocharacter of $G_{\mathbb{C}}$
defined by 
\[
\mu_{h}(z)=h_{\mathbb{C}}(z,1)
\]
where we have fixed an isomorphism $\mathbb{S}(\mathbb{C})\cong\mathbb{C}^{\times}\times\mathbb{C}^{\times}$
(and $\mathbb{S}(\mathbb{R})=\mathbb{C}^{\times}$ embeds by $z\mapsto(z,\bar{z})$).
We will follow the conventions of \cite{Del1} for associating real
Hodge structures with representations of $\mathbb{S}$ (i.e. the $(p,q)$-space
is the subspace where $\mathbb{S}(\mathbb{C})$ acts by $(z,w)\mapsto z^{-p}w^{-q}$)
and the normalization of the reciprocity map of local class field
theory (uniformizers go to geometric Frobenii). Fix a pinning $(T,B)$
of $G$ with corresponding pinning $(\widehat{T},\widehat{B})$ of
$\widehat{G}$ that is fixed by the Galois action (we suppress the choice of elements in the root spaces for the simple roots). This gives us the
notion of dominant weights and coweights and positive roots and coroots
for $G$ and $\widehat{G}$. We let $\mu$ denote an element
of the conjugacy class of $(-\mu_{h})_{h\in X}$ which is antidominant.
The representation $r_{L}$ is defined in two steps. $\mu$ defines
an antidominant integral weight $\widehat{\mu}$ of $\widehat{G}$
and hence a unique irreducible representation 
\[
r\,:\,\widehat{G}\rightarrow{\rm Aut}(V_{\mu})
\]
that has $\widehat{\mu}$ as an extreme weight. We extend this to our desired representation 
\[
r_{L}\,:\,^{L}G_{E}=\Gamma_{E}\ltimes\widehat{G}\rightarrow{\rm Aut}(V_{\mu})
\]
by letting $\Gamma_{E}=Gal(\overline{\mathbb{Q}}/E)$ act trivially
on the weight space of $\widehat{\mu}$. 

Buzzard and Gee have defined the notion of a $C$-group $^{C}G$ for
$G$ (\cite{BG1} Definition 5.3.2), which is defined to be the $L$-group
$^{L}\widetilde{G}$ of a canonical extension 
\[
1\rightarrow{\rm GL_{1}\rightarrow\widetilde{G}\rightarrow G\rightarrow1}
\]
of $G$. Proposition 5.3.3 of \cite{BG1} gives
a central isogeny 
\[
\widehat{G}\times\mathbb{C}^{\times}\rightarrow\widehat{\widetilde{G}}
\]
which is Galois-equivariant and whose kernel is generated by the element
$(e,-1)$ of order $2$. Here $e=\widehat{\chi}(-1)$, where $\chi$
is the sum of positive roots of $G$ and $\widehat{\chi}$
is the sum of the positive coroots of $\widehat{G}$. We wish to extend
$r_{L}$ to a representation $r_{C}$ of $^{C}G_{E}=\Gamma_{E}\ltimes\widehat{\widetilde{G}}$.
Let $\left\langle -,-\right\rangle $ denote the pairing between the
character and cocharacter lattices of $G$ and $\widehat{G}$ to $\mathbb{Z}$.
Part 1) of the following Lemma is well known.
\begin{lem}
\label{lem:easy}1) $\left\langle \chi,\mu\right\rangle =-d$.

2) $e$ acts as $(-1)^{-d}$ on $V_{\mu}$.\end{lem}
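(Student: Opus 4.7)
The plan is to prove the two parts in order, using (1) to feed into (2). For (1), the key input is the first of Deligne's axioms for a Shimura datum: the adjoint Hodge structure induced on $\mathrm{Lie}(G)_{\mathbb{R}}$ by any $h\in X$ is of type $\{(-1,1),(0,0),(1,-1)\}$, with the $(-1,1)$-piece (the holomorphic tangent space of $X$ at $h$) of complex dimension $d$. By the paper's conventions this means $\mu_h$ acts on the $(p,q)$-piece of $\mathrm{Lie}(G)_{\mathbb{C}}$ with weight $-p$, hence on $\mathrm{Lie}(G)_{\mathbb{C}}$ with eigenvalues in $\{-1,0,+1\}$ and each $\pm 1$-eigenspace of dimension $d$. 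Since $-\mu$ is $G(\mathbb{C})$-conjugate to $\mu_h$, it shares this eigenspace decomposition; and since $-\mu$ is dominant (because $\mu$ is antidominant), every root on which $-\mu$ acts with weight $+1$ must be positive, while antidominance forbids any positive root from having $\mu$-weight $+1$. Thus $\langle\chi,\mu\rangle = \sum_{\alpha>0}\langle\alpha,\mu\rangle$ receives contribution $-1$ from each of the $d$ positive roots in the $(-1,1)$-piece and $0$ from all others, giving $\langle\chi,\mu\rangle=-d$.

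For (2), I first check that $e=\widehat{\chi}(-1)$ is central in $\widehat{G}$. Writing $\widehat{\chi}=2\widehat{\rho}^{\vee}$ with $\widehat{\rho}^{\vee}$ the half-sum of positive coroots (which pairs to $1$ with each simple root of $\widehat{G}$), every root $\alpha$ of $\widehat{G}$ satisfies $\langle\alpha,\widehat{\chi}\rangle = 2\,\mathrm{ht}(\alpha)$, hence $\alpha(e)=(-1)^{2\,\mathrm{ht}(\alpha)}=1$. Therefore $e\in\widehat{T}$ acts trivially on every root space, so it centralizes $\widehat{T}$ and all root subgroups, and hence all of $\widehat{G}$. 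Thus $e$ acts as a scalar on the irreducible representation $V_{\mu}$, which I read off on the extreme weight line: the scalar equals $\widehat{\mu}(\widehat{\chi}(-1))=(-1)^{\langle\widehat{\mu},\widehat{\chi}\rangle}$. Under Langlands duality $\langle\widehat{\mu},\widehat{\chi}\rangle=\langle\chi,\mu\rangle$, and (1) gives the value $-d$.

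Neither step is substantively hard: the essential inputs are Deligne's axiom and the elementary height computation used for the centrality check. The only real care needed is with sign conventions, in particular the paper's choice of $\mu$ as antidominant (so that it is $-\mu$ which corresponds to a dominant cocharacter), but this is bookkeeping rather than a genuine obstacle.
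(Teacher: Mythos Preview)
Your proof is correct and follows essentially the same approach as the paper: for (1) you invoke Deligne's Hodge-type axiom to see that $\langle\alpha,\mu\rangle\in\{-1,0\}$ for positive roots $\alpha$, with exactly $d$ of them equal to $-1$, and for (2) you evaluate the scalar on the extreme weight line via $\widehat{\mu}(\widehat{\chi}(-1))=(-1)^{\langle\chi,\mu\rangle}$. The only substantive addition is that you supply the height argument for the centrality of $e$, which the paper simply asserts; this is a nice touch but does not change the route.
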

\begin{proof}
1) By axiom (2.1.1.1) for a Shimura datum (in the notation of \cite{Del1} \S 2.1.1) and the choice of $\mu$, $\mu(z)$ acts as $z^{-1}$ or $1$ on the positive
root spaces (i.e. $\left\langle \alpha,\mu\right\rangle =-1$ or $0$ for
each positive root $\alpha$) and as $z$ or $1$ on the negative root spaces. The result follows since there are $d$
roots on which $\mu(z)$ acts as $z^{-1}$, by axiom (2.1.1.2) for a Shimura datum (\cite{Del1} \S 2.1.1). 

2) Since $e$ is central it acts on $V_{\mu}$ by a scalar and hence
it is enough to compute the action on the weight space of $\widehat{\mu}$. By part 1), the action is by 
\[
\widehat{\mu}(e)=\widehat{\mu}(\widehat{\chi}(-1))=(-1)^{\left\langle \widehat{\mu},\widehat{\chi}\right\rangle }=(-1)^{\left\langle \chi,\mu\right\rangle }=(-1)^{-d}.
\]

\end{proof}
Let us now extend $r$ to a representation of $\widehat{G}\times{\rm GL}_{1}$
by 
\[
(g,z)\mapsto r(g)z^{-d}.
\]
By Lemma \ref{lem:easy} $(e,-1)$ acts trivially so this defines
a representation of $\widehat{\widetilde{G}}$. We may extend this
to our desired representation $r_{C}$ of $^{C}G_{E}$ on $V_{\mu}$
by letting $\Gamma_{E}$ act trivially on the weight space of $\widehat{\mu}$;
by the $\Gamma_{E}$-equivariance of the isogeny $\widehat{G}\times{\rm GL}_{1}\rightarrow\widehat{\widetilde{G}}$
this defines an extension of $r_{L}$.

\section{\label{sec: 2}The result}

Let $\tau$ be an irreducible algebraic representation of $G$, and
let $\pi$ be a discrete automorphic representation of $G$ which
is cohomological for $\tau$, i.e. there is an integer $i$ such that
$H^{i}(\mathfrak{g}_{\infty},K_{\infty},\pi_{\infty}\otimes\tau)\neq0$,
where $\mathfrak{g}_{\infty}$ is the Lie algebra of $G_{\mathbb{R}}$,
$K_{\infty}\subseteq G(\mathbb{R})$ is the stabilizer of some fixed
$x\in X$ and $H^{i}(\mathfrak{g}_{\infty},K_{\infty},-)$ is the
relative Lie algebra cohomology. Lemma 7.2.2 of \cite{BG1} says that
$\pi$ is $C$-algebraic (\cite{BG1}, Definition 3.1.2).

\begin{rem*}
$\pi$ is also $C$-arithmetic (\cite{BG1}, Definition 3.1.4). This
follows from the argument in \S2.3 of \cite{BR1} taking coefficients
in $\mathcal{F}_{\tau}$ (defined near the end of the first paragraph
of \S\ref{sec:Comparison-with-Kottwitz} of this paper) instead of the trivial
local system.
\end{rem*}

Let $\pi$ be a $C$-algebraic automorphic representation on $G$.
We will briefly review the construction of an $L$-algebraic (\cite{BG1},
Definition 3.1.1) automorphic representation $\widetilde{\pi}$ of
$\widetilde{G}$, canonically associated to $\pi$ (see \cite{BG1},
discussion before Conjecture 5.3.4). Given $\pi$, one uses the canonical
map $\widetilde{G}\rightarrow G$ to pull back $\pi$ to a $C$-algebraic
automorphic representation $\pi^{\prime}$ (\cite{BG1} Lemma 5.1.2).
The central isogeny $\widehat{G}\times{\rm GL}_{1}\rightarrow\widehat{\widetilde{G}}$
mentioned in the previous section is dual to an isogeny $(c,\xi)\,:\,\widetilde{G}\rightarrow G\times{\rm GL}_{1}$
defined over $\mathbb{Q}$. Then 
\[
g\mapsto|\xi(g)|^{1/2}
\]
is a character $\widetilde{G}(\mathbb{A})\rightarrow\mathbb{C}^{\times}$,
and we define $\widetilde{\pi}$ to be the twist $\pi^{\prime}\otimes|\xi(-)|^{1/2}$.
It is an $L$-algebraic automorphic representation of $\widetilde{G}$. 

For a local or global field $F$ of characteristic $0$ we let $\mathcal{L}_{F}$
denote the Langlands group of $F$. It carries a canonical surjection
$\mathcal{L}_{F}\twoheadrightarrow W_{F}$, where $W_{F}$ is the
Weil group of $F$. When $F$ is local, we have $\mathcal{L}_{F}=W_{F}$
if $F$ is archimedean (with the canonical map being the identity)
and we take $\mathcal{L}_{F}=W_{F}\times{\rm SL}_{2}(\mathbb{C})$
if $F$ is non-archimedean (the canonical map being projection onto
the first factor). When $F$ is global, this group only exists conjecturally.
We will use it only for motivation (in particular to make the comparison
with \cite{Kot1}); in the end all conjectures and results may be
stated using only $\mathcal{L}_{F}$ for local fields. 

Let $|-|$ denote the composition of $\mathcal{L}_{F}\twoheadrightarrow W_{F}$
with the norm character $W_{F}\rightarrow\mathbb{C}^{\times}$.

\begin{lem}
\label{lem: key lemma}1) Let $p$ be a finite prime where $\pi$
and $G$ are unramified and let $\phi_{p}\,:\,\mathcal{L}_{\mathbb{Q}_{p}}\rightarrow{^{L}G}$
be the $L$-parameter (Satake parameter) associated with $\pi_{p}$.
Then the $L$-parameter $\widetilde{\phi}_{p}\,:\,\mathcal{L}_{\mathbb{Q}_{p}}\rightarrow{^{C}G}$
associated to $\widetilde{\pi}_{p}$ is given by 
\[
\widetilde{\phi}_{p}(w)=(\phi_{p}(w),|w|^{1/2})
\]
 where we abuse notation and denote by $(\phi_{p}(w),|w|^{1/2})$
the image of $(\phi_{p}(w),|w|^{1/2})\in{^{L}G}\times\mathbb{C}^{\times}$
in $^{C}G$.

2) Let $\phi_{\mathbb{C}}\,:\,\mathcal{L}_{\mathbb{C}}\rightarrow{^{L}G}$
be the restriction of the $L$-parameter associated with $\pi_{\infty}$.
Then the restriction of the $L$-parameter of $\widetilde{\pi}_{\infty}$
to $\mathcal{L}_{\mathbb{C}}$ is given by 
\[
\widetilde{\phi}_{\mathbb{C}}(w)=(\phi_{\mathbb{C}}(w),|w|^{1/2}).
\]
\end{lem}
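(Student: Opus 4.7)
The strategy will be to split both parts into two standard local functorialities of the Langlands correspondence and then combine them. By construction,
\[
\widetilde{\pi}_v=(\pi_v\circ c)\otimes|\xi(-)|^{1/2},
\]
and the dual of the isogeny $(c,\xi)\,:\,\widetilde{G}\to G\times\mathrm{GL}_1$ is the central isogeny $\widehat{G}\times\mathbb{C}^\times\to\widehat{\widetilde{G}}$ recalled in \S\ref{sec: 1}. I plan to compute the $L$-parameter of each of the two factors separately and then multiply them in $\widehat{\widetilde{G}}$.

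First, for the pullback $\pi_v\circ c$: the local Langlands correspondence is functorial along maps of reductive groups, and the dual map $\widehat{c}\,:\,\widehat{G}\to\widehat{\widetilde{G}}$ is $g\mapsto(g,1)$ followed by the quotient by $\langle(e,-1)\rangle$. Hence the parameter of $\pi_v\circ c$ is $w\mapsto(\phi_v(w),1)$.

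Second, for the twist by $|\xi(-)|^{1/2}$: this character factors as $\widetilde{G}(\mathbb{Q}_v)\xrightarrow{\xi}\mathbb{Q}_v^\times\xrightarrow{|-|^{1/2}}\mathbb{C}^\times$. By local class field theory with the normalization fixed in \S\ref{sec: 1}, the $L$-parameter of $|-|^{1/2}\,:\,\mathbb{Q}_v^\times\to\mathbb{C}^\times$ is $w\mapsto|w|^{1/2}$ in the paper's notation; and since $\xi\,:\,\widetilde{G}\to\mathrm{GL}_1$ is dual to $z\mapsto(1,z)$, the twist contributes the parameter $w\mapsto(1,|w|^{1/2})$. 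Multiplying the two parameters inside $\widehat{\widetilde{G}}$ (legitimate because the image of the $\mathbb{C}^\times$-factor is central) yields $\widetilde{\phi}_v(w)=(\phi_v(w),|w|^{1/2})$, as claimed.

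For part 1), in the unramified setting both functorialities are immediate consequences of the Satake isomorphism applied to the unramified groups $G$ and $\widetilde{G}$ and to $\mathrm{GL}_1$. For part 2), one restricts everything to $\mathcal{L}_\mathbb{C}=W_\mathbb{C}=\mathbb{C}^\times$ and runs the same computation using Langlands's parameterization of admissible representations of real reductive groups together with local Langlands for the torus $\mathbb{G}_m/\mathbb{R}$. The only real obstacle I anticipate is bookkeeping with normalizations: one must verify that, with the paper's conventions (uniformizers mapped to geometric Frobenii, and the $|-|$ of \S\ref{sec: 2} being the norm character pulled back from $W_F$), the archimedean absolute value $|\xi(-)|^{1/2}$ really does correspond to the parameter $w\mapsto|w|^{1/2}$ on both sides, and likewise at the finite primes. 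Once those normalizations are checked, the rest of the argument is purely formal.
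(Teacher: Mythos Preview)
Your proposal is correct and follows essentially the same two-step strategy as the paper: first compute the parameter of the pullback $\pi'=\pi\circ c$ as $w\mapsto(\phi_v(w),1)$, then account for the twist by $|\xi(-)|^{1/2}$. The only minor difference is that the paper justifies the pullback step more concretely by passing through unramified principal series and reducing to functoriality for tori (via ${\rm Ind}_B^G\chi$ pulling back to ${\rm Ind}_{\widetilde B}^{\widetilde G}\chi'$), rather than invoking functoriality of local Langlands abstractly, but this is exactly what your reference to the Satake isomorphism amounts to.
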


\begin{proof}
1) This follows from the construction of the unramified Local Langlands
correspondence as described in \S 10.4 of \cite{Bor} and is implicit
in \cite{BG1} (in the derivation of Conjecture 5.3.4 from Conjecture
3.2.1); we will content ourselves with giving a brief sketch of the
proof. Let $\widetilde{T}$ resp. $\widetilde{B}$ be the inverse images
of $T$ resp. $B$ under $\widetilde{G}\rightarrow G$. $\pi_{p}$
occurs as a subquotient of some unramified principal series ${\rm Ind}_{B}^{G}\chi$
(normalized induction), where $\chi\,:\, T(\mathbb{Q}_{p})\rightarrow\mathbb{C}^{\times}$
is an unramified character. Since parabolic induction behaves well
with respect to pullback $\widetilde{G}(\mathbb{Q}_{p})\rightarrow G(\mathbb{Q}_{p})$,
$\pi^{\prime}$ occurs as a subquotient of ${\rm Ind}_{\widetilde{B}}^{\widetilde{G}}\chi^{\prime}$,
where $\chi^{\prime}$ is the composition of $\chi$ with $\widetilde{T}\rightarrow T$.
This implies that the $L$-parameter of $\pi^{\prime}$ is $w\mapsto(\phi_{p}(w),1)$
since the correspondence for unramified characters of unramified tori
is functorial. To get the $L$-parameter of $\widetilde{\pi}$ we
use that the unramified Local Langlands correspondence behaves well
with respect to unramified twists, cf. e.g. Remark 2.2.1 of \cite{BG1}
(though it is perhaps simpler to deduce this from the construction
in \cite{Bor} since parabolic induction behaves well under twists,
rather than using the Satake isomorphism directly as is done in \cite{BG1}).

2) is proved in exactly the same way as 1), though the details are
simpler, using the construction of the complex Local Langlands correspondence
(see \cite{Bor} \S 11.4). Here one deduces the behavior with respect
to twists from same property for parabolic induction as indicated
in the end of the proof of 1).
\end{proof}

The following simple consequence is our main technical result.

\begin{cor}
\label{cor: main}With notation as in Lemma \ref{lem: key lemma},
we have 
\[
(r_{L}\circ\phi_{v})\otimes|-|^{-d/2}=r_{C}\circ\widetilde{\phi}_{v}
\]
for $v$ finite where $\pi$ and $G$ are unramified or $v=\mathbb{C}$.\end{cor}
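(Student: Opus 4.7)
The plan is to substitute the explicit formula for $\widetilde{\phi}_v$ provided by Lemma \ref{lem: key lemma} into the definition of $r_C$ given in Section \ref{sec: 1} and simply read off both sides of the claimed identity. Fix $w \in \mathcal{L}_v$. By Lemma \ref{lem: key lemma} we have $\widetilde{\phi}_v(w) = (\phi_v(w), |w|^{1/2})$, viewed as an element of $\widehat{\widetilde{G}}$ via the isogeny $\widehat{G} \times \mathrm{GL}_1 \to \widehat{\widetilde{G}}$. By construction, the restriction of $r_C$ to $\widehat{\widetilde{G}}$ sends the image of $(g,z)$ to $r(g) z^{-d}$, so
$r_C(\widetilde{\phi}_v(w)) = r(\phi_v(w)) \cdot (|w|^{1/2})^{-d} = r_L(\phi_v(w)) \cdot |w|^{-d/2}$, which is precisely $\bigl((r_L \circ \phi_v) \otimes |-|^{-d/2}\bigr)(w)$.

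The only thing to check beyond this direct computation is that the identity continues to hold on the Galois part of the parameter (elements not in the connected component of the dual group). This is essentially built in: both $r_L$ and $r_C$ are defined so that $\Gamma_E$ acts trivially on the weight space of $\widehat{\mu}$, and the extension of $r$ from $\widehat{G}$ to $\widehat{\widetilde{G}}$ is Galois-equivariant because the isogeny $\widehat{G} \times \mathrm{GL}_1 \to \widehat{\widetilde{G}}$ is. For $v = \mathbb{C}$ there is no Galois part (the Weil group of $\mathbb{C}$ is just $\mathbb{C}^\times$), and for finite unramified $v$ the parameter is determined by its value on a Frobenius element, which is already covered by the calculation above after choosing a place of $E$ over $v$.

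The main potential pitfall---which is not really an obstacle---is keeping track of normalizations: that $(|w|^{1/2})^{-d} = |w|^{-d/2}$, and that the $|\xi(-)|^{1/2}$ twist used to pass from $\pi'$ to $\widetilde{\pi}$ on the automorphic side corresponds on the dual side to the $\mathrm{GL}_1$-factor paired with the weight $-d$ that appears in the definition of $r_C$. Once these conventions are matched, the Corollary is an immediate consequence of Lemma \ref{lem: key lemma} and the unravelling of the definition of $r_C$ from Section \ref{sec: 1}; no further input is required.
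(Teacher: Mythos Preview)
Your proposal is correct and follows exactly the same route as the paper's own proof: substitute $\widetilde{\phi}_v(w)=(\phi_v(w),|w|^{1/2})$ from Lemma~\ref{lem: key lemma} into the defining formula $r_C(g,z)=r_L(g)z^{-d}$ and read off the result. Your additional paragraphs on the Galois component and on normalizations are sound but unnecessary, since the paper already records that $r_C$ is constructed as a $\Gamma_E$-equivariant extension of $r_L$; the paper's proof accordingly consists of the single displayed line you reproduce.
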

\begin{proof}
By Lemma \ref{lem: key lemma} and the definition of $r_{C}$ we have
\[
r_{C}(\widetilde{\phi}_{v}(w))=r_{C}(\phi_{v}(w),|w|^{1/2})=r_{L}(\phi_{v}(w))|w|^{-d/2}
\]
as desired. 
\end{proof}

\section{Comparison with Kottwitz's Conjecture and Arthur's Result\label{sec:Comparison-with-Kottwitz}}

Let us try to briefly describe the conjecture of Kottwitz and the
result of Arthur. For simplicity, we will follow Arthur's presentation
in \S9 of \cite{Art1}. Recall our Shimura datum $(G,X)$. If $K\subseteq G(\mathbb{A}^{\infty})$
is a compact open subgroup we will let $Sh_{K}$ denote the canonical
model over $E$ of the corresponding Shimura variety and $\overline{Sh}_{K}$
its minimal compactification (\cite{Pin1}). We write $Sh$ for the inverse
system $(Sh_{K})_{K}$ and $\overline{Sh}$ for $(\overline{Sh}_{K})_{K}$.
The reflex field $E$ comes with an embedding into $\mathbb{C}$ and
we let $\overline{\mathbb{Q}}$ denote the algebraic closure of $\mathbb{Q}$
inside $\mathbb{C}$. Let $\ell$ be a fixed rational prime; we fix
an isomorphism $\iota\,:\,\overline{\mathbb{Q}}_{\ell}\cong\mathbb{C}$,
thus we may view $\overline{\mathbb{Q}}$ as a subfield of $\overline{\mathbb{Q}}_{\ell}$.
Let $\tau$ be an algebraic representation of $G$. Then we may associate
to $\tau$ a sheaf $\mathcal{F}_{\tau}$ which is either a constructible
local system of $\mathbb{Q}$-vector spaces on $Sh_{K}(\mathbb{C})^{an}$,
a smooth $\overline{\mathbb{Q}}_{\ell}$-sheaf on $Sh_{K}$ or a vector
bundle with flat connection on $Sh_{K}$. They satisfy various compatibilities
with respect to transition maps in $Sh$ and comparison theorems for
the relevant cohomology theories; we will abuse notation and denote
them all by $\mathcal{F}_{\tau}$, as well as the canonical extension
of the smooth $\overline{\mathbb{Q}}_{\ell}$-sheaf $\mathcal{F}_{\tau}$
on $Sh_{K}$ to $\overline{Sh}_{K}$ (defined via the theory of perverse
sheaves). There is a geometric action of $G(\mathbb{A}^{\infty})$
on $Sh$ and $\overline{Sh}$. For any suitable cohomology theory
$H$ we write
\[
H^{\ast}(Sh_{K},\mathcal{F}_{\tau})=\bigoplus_{i}H^{i}(Sh_{K},\mathcal{F}_{\tau})
\]
 
\[
H^{\ast}(Sh,\mathcal{F}_{\tau})=\underset{\rightarrow}{{\rm lim}}\,H^{\ast}(Sh_{K},\mathcal{F}_{\tau})\in Mod(G(\mathbb{A}^{\infty})\times?)
\]
where the $?$ signifies that the cohomology theory may carry extra structure.
We use analogous notation for $\overline{Sh}$.

The starting point for the analysis of the cohomology of Shimura varieties
is the result 
\[
H_{(2)}^{\ast}(Sh,\mathcal{F}_{\tau})=\bigoplus_{\pi}m(\pi)\pi^{\infty}\otimes H^{\ast}(\mathfrak{g}_{\infty},K_{\infty},\pi_{\infty}\otimes\tau)\in Mod(G(\mathbb{A}^{\infty}))
\]
of Borel and Casselman \cite{BoCa}, where $H_{(2)}$ denotes $L^{2}$-cohomology,
$m(\pi)$ is the multiplicity of $\pi$ in the discrete spectrum of
$L^{2}(G(\mathbb{Q})\backslash G(\mathbb{A})^{1})$ and $H^{\ast}(\mathfrak{g}_{\infty},K_{\infty},-)=\bigoplus_{i}H^{i}(\mathfrak{g}_{\infty},K_{\infty},-)$
is total relative Lie algebra cohomology. To $\pi$ such that $m(\pi)\neq0$
one should conjecturally be able to attach an $A$-parameter $\psi$
and we may, assuming Conjecture 8.1 of \cite{Art1}, rewrite the above
as 

\begin{equation}
H_{(2)}^{\ast}(Sh,\mathcal{F}_{\tau})=\bigoplus_{\psi}\bigoplus_{\pi\in\Pi_{\psi}}m_{\psi}(\pi)\pi^{\infty}\otimes H^{\ast}(\mathfrak{g}_{\infty},K_{\infty},\pi_{\infty}\otimes\tau)\label{eq: 2}
\end{equation}

where $\Pi_{\psi}$ is the $A$-packet associated to $\psi$ and $m_{\psi}(\pi)$
is a certain multiplicity. Put 
\[
V_{\psi}=\bigoplus_{\pi_{\infty}\in\Pi_{\psi_{\infty}}}H^{\ast}(\mathfrak{g}_{\infty},K_{\infty},\pi_{\infty}\otimes\tau).
\]
Associated with $\psi$ is a group $S_{\psi}$ defined on p. 52 of
\cite{Art1} (as well as local versions $S_{\psi_{v}}$ for any place $v$ of $\mathbb{Q}$). $V_{\psi}$ carries a representation of $S_{\psi_{\infty}}$, a Hodge structure and a Lefschetz decomposition
(\cite{Art1} pp. 59-61). Arthur then defines, for each $\pi$, a vector
space $U_{\pi}$ that only depends on $\pi^{\infty}$ and carries
an action of $S_{\psi}$, an action of $S_{\psi}$ on $U_{\pi}\otimes V_{\psi}$ and rewrites (\ref{eq: 2}) as 
\begin{equation}
H_{(2)}^{\ast}(Sh,\mathcal{F}_{\tau})=\bigoplus_{\psi}\bigoplus_{\pi^{\infty}\in\Pi_{\psi^{\infty}}}\pi^{\infty}\otimes(U_{\pi}\otimes V_{\psi})_{\epsilon_{\psi}}\label{eq: 3}
\end{equation}
where $\epsilon_{\psi}$ is a certain sign character of $S_{\psi}$
and $(-)_{\epsilon_{\psi}}$ denotes the subspace where $S_{\psi}$
acts as $\epsilon_{\psi}$. This uses Conjecture 8.5 of \cite{Art1}
for the multiplicity $m_{\psi}(\pi)$. By various comparison theorems
together with Zucker's conjecture equation (\ref{eq: 3}) holds for
$\ell$-adic intersection cohomology of the minimal compactification
\[
H_{et,\ell}^{\ast}(\overline{Sh},\mathcal{F}_{\tau})=\bigoplus_{\psi}\bigoplus_{\pi^{\infty}\in\Pi_{\psi^{\infty}}}\pi^{\infty}\otimes(U_{\pi}\otimes V_{\psi})_{\epsilon_{\psi}}
\]
as $G(\mathbb{A}^{\infty})$-modules, after applying $\iota$ (as
$\iota$ is fixed, we will omit it from the notation). $(U_{\pi}\otimes V_{\psi})_{\epsilon_{\psi}}$
carries an action of $S_{\psi}$ (by $\epsilon_{\psi}$), and for
$L^{2}$-cohomology it carries a Hodge structure and a Lefschetz decomposition,
whereas for $\ell$-adic intersection cohomology it carries a representation
of $Gal(\overline{\mathbb{Q}}/E)$ and a Lefschetz decomposition.
We wish to describe this extra structure. 

To that end, let $\psi\,:\,\mathcal{L}_{\mathbb{Q}}\times{\rm SL}_{2}(\mathbb{C})\rightarrow{^{L}G}$
be the $A$-parameter attached to $\pi$. We write $\psi_{E}$ for
the restriction to $\mathcal{L}_{E}\times{\rm SL}_{2}(\mathbb{C})$
and let $\phi_{E}$ denote the associated $L$-parameter $\mathcal{L}_{E}\rightarrow{^{L}G}$,
defined by 
\[
\phi_{E}(w)=\psi_{E}\left(w,\left(\begin{array}{cc}
|w|^{1/2} & 0\\
0 & |w|^{-1/2}
\end{array}\right)\right).
\]
Kottwitz composes $\phi_{E}$ with $r_{L}$ to obtain an $L$-parameter
$\phi_{E,r_{L}}$ and conjectures that there should exist a motive
$M_{\psi}$ over $E$ whose $\ell$-adic \'etale realization $H_{et,\ell}(M_{\psi})$
satisfies 
\[
WD_{\iota}(H_{et,\ell}(M_{\psi})|_{Gal(\overline{E}_{v}/E_{v})})=(\phi_{E,r_{L}}|-|^{-d/2})|_{\mathcal{L}_{E_{v}}}
\]
for each place $v\nmid\ell\infty$ of $E$ (where $WD_{\iota}$ means
take the associated complex Weil-Deligne representation, using $\iota$).
Moreover its de Rham realization $H_{dR,v}(M_{\psi})$ with respect
to a place $v\mid\infty$ of $E$ should satisfy 
\[
H_{dR,v}(M_{\psi})=(\phi_{E,r_{L}}|-|^{-d/2})|_{\mathcal{L}_{\overline{E}_{v}}}.
\]
Here $H_{dR,v}(M_{\psi})$, as a Hodge structure, carries an action
of $\mathbb{C}^{\times}\cong W_{\overline{E}_{v}}$. In both cases
the Lefschetz decomposition on $H_{et}(M_{\psi})$ and $H_{dR}(M_{\psi})$
should be given by $r_{L}\circ\psi_{E}|_{{\rm SL}_{2}(\mathbb{C})}$.
Kottwitz verifies (\cite{Kot1} p. 200) that these actions of $r_{L}\circ\psi_{E}$
(hence of $\phi_{E,r_{L}}$ and its twist) commute with the action
of $S_{\psi}$ on $V_{\psi}$, and hence that we get induced structures
on $(U_{\pi}\otimes V_{\psi})_{\epsilon_{\psi}}$. For the finite places we then have

\begin{conjecture}
\label{Kottwitz}(Kottwitz, \cite{Kot1} p. 201). Assume that the
derived group $G^{der}$ of $G$ is simply connected and that the
maximal $\mathbb{R}$-split torus of the center $Z(G)$ of $G$ is
$\mathbb{Q}$-split. Then the $Gal(\overline{\mathbb{Q}}/E)$-representation
\[
W_{et,\ell}^{\ast}(\pi^{\infty})={\rm Hom}_{G(\mathbb{A}^{\infty})}(\pi^{\infty},H_{et,\ell}^{\ast}(\overline{Sh},\mathcal{F}_{\tau}))
\]
is equal to 
\[
\bigoplus_{\psi\,:\,\pi^{\infty}\in\Pi_{\psi^{\infty}}}(U_{\pi}\otimes H_{et}(M_{\psi}))_{\epsilon_{\psi}}.
\]
\end{conjecture}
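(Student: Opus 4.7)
Since Kottwitz's conjecture is open, what one can realistically aim for---given the framework of Sections~\ref{sec: 1}--\ref{sec: 2}---is to \emph{derive} it from the Buzzard-Gee conjecture applied to $\widetilde{\pi}$. Accordingly, I would start by assuming Buzzard-Gee for $\widetilde{\pi}$, which yields a Galois representation $\rho_\pi\,:\,Gal(\overline{\mathbb{Q}}/\mathbb{Q}) \to {^{C}G(\overline{\mathbb{Q}}_\ell)}$; I then restrict to $Gal(\overline{\mathbb{Q}}/E)$ and compose with $r_C$ from Section~\ref{sec: 1} to produce the candidate Galois representation that should cut out $W_{et,\ell}^{\ast}(\pi^\infty)$ on the right-hand side of Kottwitz's conjecture.

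Next I would check the local compatibilities. At each finite place $v \nmid \ell\infty$ where $\pi$ and $G$ are unramified, Corollary~\ref{cor: main} identifies $WD_\iota(r_C \circ \rho_\pi|_{Gal(\overline{E}_v/E_v)})$ with $(r_L \circ \phi_v) \otimes |-|^{-d/2}$, which is exactly the Weil-group part of Kottwitz's prediction---the explicit Tate twist in his formulation now appearing intrinsically from the $z^{-d}$ factor baked into $r_C$. The archimedean de Rham prediction is handled in parallel using Lemma~\ref{lem: key lemma}(2) together with Arthur's reciprocity law at infinity from \cite{Art1} to relate the $L$-parameter of $\pi_\infty$ to the Hodge-theoretic data of $\mu_h$. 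The Lefschetz decomposition supplied by $r_L \circ \psi_E|_{{\rm SL}_{2}(\mathbb{C})}$ then comes along via the formula $\phi_E(w) = \psi_E(w, {\rm diag}(|w|^{1/2}, |w|^{-1/2}))$.

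The hard part will be the $A$-packet bookkeeping on the automorphic side: matching $G(\mathbb{A}^\infty)$-isotypic components via~(\ref{eq: 3}) requires verifying that the $r_L \circ \psi_E$-action commutes with the $S_\psi$-action used by Arthur and that the sign character $\epsilon_\psi$ cuts out the correct subspace. Both of these invoke Conjectures~8.1 and~8.5 of \cite{Art1} and are not accessible from the $L$-group formalism alone. A subtler point is that Buzzard-Gee only determines $\rho_\pi$ up to $\widehat{\widetilde{G}}(\overline{\mathbb{Q}}_\ell)$-conjugacy, whereas Kottwitz's formulation demands a genuine Galois action on intersection cohomology; to pin this down one must appeal to rigidity at cohomological $\pi$ together with the archimedean constraint, and ultimately accept that the equivalence produced is between two conjectural statements rather than an unconditional theorem.
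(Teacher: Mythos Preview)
The statement you are attempting to prove is a \emph{conjecture}, and the paper does not prove it; it merely records Kottwitz's formulation and then uses Proposition~\ref{prop: 6} to rewrite it as Conjecture~\ref{Conj, modified}. So there is no ``paper's own proof'' to compare against, and your proposal is aiming at something the paper never claims.

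More importantly, your strategy of \emph{deriving} Kottwitz from Buzzard--Gee has a structural gap. The Buzzard--Gee conjecture for $\widetilde{\pi}$ produces an abstract Galois representation $\rho_\pi$ valued in ${^{C}G(\overline{\mathbb{Q}}_\ell)}$, characterized by local conditions at unramified places. It says nothing whatsoever about the \'etale intersection cohomology $H_{et,\ell}^{\ast}(\overline{Sh},\mathcal{F}_\tau)$, which is the left-hand side of Kottwitz's conjecture. The content of Kottwitz's conjecture is precisely the assertion that this geometrically-defined Galois module decomposes in the stated way; no amount of local compatibility checking on $r_C\circ\rho_\pi$ can establish that, because you have no independent handle on $W_{et,\ell}^{\ast}(\pi^\infty)$. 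Your argument would at best show that \emph{if} Kottwitz holds and \emph{if} Buzzard--Gee holds, then $H_{et}(M_\psi)\cong r_C\circ\rho_{E,\pi,\ell}$ --- which is exactly Proposition~\ref{prop: 6}, and is a compatibility check, not a derivation of either conjecture from the other.

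You do acknowledge in your final sentence that the outcome is ``an equivalence between two conjectural statements,'' which is correct and is what the paper actually does. But the framing throughout --- ``derive Kottwitz from Buzzard--Gee,'' ``produce the candidate Galois representation that should cut out $W_{et,\ell}^\ast(\pi^\infty)$'' --- suggests a direction of implication that does not exist. The paper's logic runs the other way: Kottwitz's description, together with the local identity of Corollary~\ref{cor: main}, yields the reformulation in Conjecture~\ref{Conj, modified}, thereby showing the Buzzard--Gee conjecture is \emph{consistent with} (not a source of) the expected cohomology.
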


\begin{rem*}
1) Kottwitz does much more than stating the conjecture. Assuming Arthur's
conjectures on the discrete spectrum (i.e. Conjectures 8.1 and 8.5 of \cite{Art1}), some conjectures on transfer
and a formula for the number of points modulo primes of good reduction
for $Sh$, Kottwitz computes the contribution of the Euler characteristic $\sum(-1)^{i}H_{c,et,\ell}^{i}(Sh,\mathcal{F}_{\tau})$ of compact support cohomology to the Euler characteristic $\sum(-1)^{i}IH_{et,\ell}^{i}(\overline{Sh},\mathcal{F}_{\tau})$.
Since Kottwitz has to work with $\sum(-1)^{i}IH_{et,\ell}^{i}(\overline{Sh},\mathcal{F}_{\tau})$
(the fundamental technique used being a comparison between geometric
sides of trace formulas), his conjecture looks slightly different
to what we have written above.

2) Of course, Kottwitz's conjecture has been (partially) proven in many cases
of PEL type A or C, and for groups related to inner forms of $GL_{2}$ over a totally
real field.
\end{rem*}

At infinity we have the following:

\begin{thm}
\label{thm: Arthur}(Arthur, \cite{Art1} Proposition 9.1). As representations
of $\mathbb{C}^{\times}$, ${\rm SL}_{2}(\mathbb{C})$ (i.e. the
Hodge structure resp. the Lefschetz decomposition) and $S_{\psi_{\infty}}$ we have
\[
V_{\psi}=H_{dR,v}(M_{\psi})
\]
for $v$ the place associated to the canonical embedding $E\hookrightarrow\mathbb{C}$.
\end{thm}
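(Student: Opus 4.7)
The plan is to compute both sides of the asserted equality explicitly and then match the three actions one at a time. First I would unpack the left-hand side using the Vogan--Zuckerman classification of cohomological representations: each $\pi_{\infty}\in\Pi_{\psi_{\infty}}$ is an $A_{\mathfrak{q}}(\lambda)$-module for a $\theta$-stable parabolic $\mathfrak{q}\subseteq\mathfrak{g}_{\infty,\mathbb{C}}$ with Levi $\mathfrak{l}$, and, up to a degree shift depending on the real rank of $\mathfrak{l}$, $H^{\ast}(\mathfrak{g}_{\infty},K_{\infty},\pi_{\infty}\otimes\tau)$ is identified with the exterior algebra on $\mathfrak{l}\cap\mathfrak{p}$ tensored with a copy of $\tau^{\vee}|_{K_{\infty}}$. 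The Hodge decomposition $\mathfrak{g}_{\infty,\mathbb{C}}=\mathfrak{k}_{\infty,\mathbb{C}}\oplus\mathfrak{p}^{+}\oplus\mathfrak{p}^{-}$ determined by $\mathrm{Ad}\circ h$ induces the bigrading that gives the $\mathbb{C}^{\times}$-action (Hodge structure), and hard Lefschetz for the K\"ahler class on $X$ supplies the $\mathrm{SL}_{2}(\mathbb{C})$-action.

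Next I would unpack the right-hand side. Since $v$ is archimedean we have $\mathcal{L}_{\overline{E}_{v}}=W_{\mathbb{C}}\cong\mathbb{C}^{\times}$, so by definition $H_{dR,v}(M_{\psi})$ is the representation of $\mathbb{C}^{\times}\times\mathrm{SL}_{2}(\mathbb{C})$ on $V_{\mu}$ obtained by restricting $r_{L}\circ\psi_{E}$ and twisting by $|-|^{-d/2}$ on the $\mathbb{C}^{\times}$-factor. For cohomological $\pi$ the archimedean parameter is of Adams--Johnson type: $\psi_{\infty}$ factors through a Levi of $\widehat{G}$ in such a way that $\psi_{\infty}|_{\mathbb{C}^{\times}}$ is essentially $\mu_{h}$ up to a shift governed by $\tau$, while $\psi_{\infty}|_{\mathrm{SL}_{2}}$ is the principal $\mathrm{SL}_{2}$ of that Levi. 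Decomposing $V_{\mu}$ under $\mu$ and invoking Lemma \ref{lem:easy}(1), the twist by $|-|^{-d/2}$ adjusts the $\mathbb{C}^{\times}$-weights so that they occupy exactly the interval dictated by the Hodge types on the left.

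With these descriptions in hand, matching the $\mathbb{C}^{\times}$- and $\mathrm{SL}_{2}$-structures reduces to a weight-by-weight comparison, with multiplicities on both sides expressible in terms of $\bigwedge^{\ast}(\mathfrak{l}\cap\mathfrak{p})$ respectively of $\mathrm{SL}_{2}$-isotypic components of $V_{\mu}$, which agree via the Vogan--Zuckerman formula together with the explicit relation between the highest weight of $\tau$, the infinitesimal character of $\pi_{\infty}$, and the cocharacter $\mu$. The main obstacle is the $S_{\psi_{\infty}}$-equivariance: one must verify that the character by which $S_{\psi_{\infty}}$ acts on each $\pi_{\infty}$-summand of $V_{\psi}$, as prescribed by the endoscopic parametrization of Adams--Johnson packets, coincides with the character by which the centralizer of $\psi_{\infty}$ in $\widehat{G}$ acts on the corresponding $\mathbb{C}^{\times}\times\mathrm{SL}_{2}$-isotypic piece of $V_{\mu}$. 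This is the technical core of the argument, and is precisely the bookkeeping that Arthur carries out in \cite{Art1}, pp.~59--61, relying on the normalization of real endoscopic transfer for cohomological packets.
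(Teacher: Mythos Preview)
The paper does not supply a proof of this theorem: it is quoted as Proposition 9.1 of \cite{Art1}, and the surrounding text only clarifies that the statement can be made unconditional by formulating it in terms of the local parameter $\psi_{\infty}$. So there is no ``paper's own proof'' to compare against.

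That said, your outline is a faithful sketch of Arthur's argument: the reduction to Vogan--Zuckerman modules $A_{\mathfrak{q}}(\lambda)$, the identification of the Hodge and Lefschetz structures via $\mathfrak{p}^{\pm}$ and the K\"ahler class, the Adams--Johnson description of $\psi_{\infty}$, and the final $S_{\psi_{\infty}}$-bookkeeping are exactly the ingredients Arthur uses on pp.~59--61 of \cite{Art1}, as you yourself note. For the purposes of this paper you need only cite the result; if you want to include a proof sketch, what you have written is appropriate and consistent with the source.
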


\begin{rem*} 
1) The way we have described it, it is perhaps not obvious that the above statement makes sense without assuming some conjectures, but  it may be formulated entirely without reference to the global $A$-parameter $\psi$ and the motive $M_{\psi}$, using only the local $A$-parameter $\psi_{\infty}$. The definition of $V_{\psi}$ then only depends on $\Pi_{\psi_{\infty}}$, which exists non-conjecturally (\cite{VZ}), and so does $H_{dR,v}(M_{\psi})$ if we define it by the desiderata outlined above, rewritten in a local form (i.e. that the Hodge structure is $(\phi_{\infty,r_{L}}|-|^{-d/2})|_{\mathcal{L}_{\overline{E}_{v}}}$ where $\phi_{\infty}$ is $L$-parameter associated with $\psi_{\infty}$; the Lefschetz decomposition is $r_{L}\circ\psi_{\infty}|_{{\rm SL}_{2}(\mathbb{C})}$, and the action of $S_{\psi_{\infty}}$ is via $r_{L}$).

2) This identifies $V_{\mu}$ and $V_{\psi}$ as
vector spaces and hence gives a formula for the dimension of $V_{\psi}$.
\end{rem*}

The next conjecture is then a consequence of Conjectures 8.1 and 8.5 of \cite{Art1}, as decribed above: 

\begin{conjecture}
\label{Arthur}(Arthur, \cite{Art1}) As representations
of $\mathbb{C}^{\times}$ and ${\rm SL}_{2}(\mathbb{C})$ (i.e. the
Hodge structure and the Lefschetz decomposition) 
\[
W_{dR}^{\ast}(\pi^{\infty})={\rm Hom}_{G(\mathbb{A}^{\infty})}(\pi^{\infty},H_{(2)}^{\ast}(Sh,\mathcal{F}_{\tau}))
\]
is equal to 
\[
\bigoplus_{\psi\,:\,\pi^{\infty}\in\Pi_{\psi^{\infty}}}(U_{\pi}\otimes H_{dR,v}(M_{\psi}))_{\epsilon_{\psi}}
\]
for $v$ the place associated to the canonical embedding $E\hookrightarrow\mathbb{C}$. \end{conjecture}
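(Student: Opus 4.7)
The plan is to derive the conjecture as a formal consequence of equation (\ref{eq: 3}) together with Theorem \ref{thm: Arthur}; the entire argument is a bookkeeping exercise once those inputs are granted, and no genuinely new analytic or geometric input is required beyond what has already been recalled. Equation (\ref{eq: 3}) itself packages the Borel--Casselman spectral decomposition of $H_{(2)}^{\ast}(Sh,\mathcal{F}_{\tau})$ through Conjectures 8.1 and 8.5 of \cite{Art1}.

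First I would apply the exact functor ${\rm Hom}_{G(\mathbb{A}^{\infty})}(\pi^{\infty},-)$ to both sides of (\ref{eq: 3}). The left-hand side becomes $W_{dR}^{\ast}(\pi^{\infty})$ by definition. On the right-hand side, each external factor $\pi^{\infty}$ in the direct sum contributes a copy of $(U_{\pi}\otimes V_{\psi})_{\epsilon_{\psi}}$ precisely when $\pi^{\infty}\in\Pi_{\psi^{\infty}}$, giving
\[
W_{dR}^{\ast}(\pi^{\infty})=\bigoplus_{\psi\,:\,\pi^{\infty}\in\Pi_{\psi^{\infty}}}(U_{\pi}\otimes V_{\psi})_{\epsilon_{\psi}}.
\]
The Hodge structure (the $\mathbb{C}^{\times}$-action) and Lefschetz decomposition (the $\mathrm{SL}_{2}(\mathbb{C})$-action) on the right are those placed there in \S 9 of \cite{Art1}: they are inherited from $V_{\psi}$ and they commute with the $S_{\psi}$-action (as recalled just before Conjecture \ref{Kottwitz}), so passage to the $\epsilon_{\psi}$-isotypic subspace is compatible with this extra structure.

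Next I would substitute using Theorem \ref{thm: Arthur}: as representations of $\mathbb{C}^{\times}\times\mathrm{SL}_{2}(\mathbb{C})\times S_{\psi_{\infty}}$, one has $V_{\psi}\cong H_{dR,v}(M_{\psi})$ at the archimedean place $v$ determined by $E\hookrightarrow\mathbb{C}$. Because $\epsilon_{\psi}$ factors through the localization $S_{\psi}\to S_{\psi_{\infty}}$, forming the $\epsilon_{\psi}$-eigenspace commutes with this identification, and tensoring with $U_{\pi}$ preserves it, yielding $(U_{\pi}\otimes V_{\psi})_{\epsilon_{\psi}}\cong(U_{\pi}\otimes H_{dR,v}(M_{\psi}))_{\epsilon_{\psi}}$ as $\mathbb{C}^{\times}\times\mathrm{SL}_{2}(\mathbb{C})$-modules. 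Combining with the previous display gives the claimed formula.

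The main obstacle lies entirely in the inputs rather than in the derivation. Conjectures 8.1 and 8.5 of \cite{Art1} are deep open problems in general, and they are precisely what allows the rewriting of the Borel--Casselman expansion as (\ref{eq: 3}) in the first place. Theorem \ref{thm: Arthur} is formally phrased in terms of the conjectural motive $M_{\psi}$; the content actually needed here is the purely local assertion --- flagged in the remark after that theorem --- that $V_{\psi}$ carries the Hodge, Lefschetz and $S_{\psi_{\infty}}$-structures predicted by $\psi_{\infty}$ via $r_{L}$. Once these inputs are accepted, the argument above is mechanical.
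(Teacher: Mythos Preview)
Your proposal is correct and matches the paper's own derivation: the paper does not give a separate proof of this conjecture but simply notes that it ``is then a consequence of Conjectures 8.1 and 8.5 of \cite{Art1}, as described above,'' i.e.\ exactly your combination of equation (\ref{eq: 3}) with Theorem \ref{thm: Arthur}. One small correction: it is not that $\epsilon_{\psi}$ factors through $S_{\psi}\to S_{\psi_{\infty}}$, but rather that the $S_{\psi}$-action on $V_{\psi}$ (and on $H_{dR,v}(M_{\psi})$) does, so the $S_{\psi_{\infty}}$-equivariance in Theorem \ref{thm: Arthur} suffices to make the identification $S_{\psi}$-equivariant and hence compatible with taking $\epsilon_{\psi}$-isotypic pieces.
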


We now wish to recast this story using the ideas of Buzzard and Gee and the representation $r_{C}$. Since our $\pi$ are $C$-algebraic, Buzzard and
Gee conjecture (\cite{BG1} Conjecture 5.3.4) that there exists a
Galois representation 
\[
\rho_{\pi,\ell}=\rho_{\pi,\ell,\iota}\,:\, Gal(\overline{\mathbb{Q}}/\mathbb{Q})\rightarrow{^{C}G(\overline{\mathbb{Q}}_{\ell}})
\]
satisfying a list of desiderata, the most important for us being that,
for finite primes $p\neq\ell$ such that $\pi$ is unramified, $\rho_{\pi,\ell}|_{W_{\mathbb{Q}_{p}}}$
is $\widehat{\widetilde{G}}(\overline{\mathbb{Q}}_{\ell})$-conjugate
to 
\[
w\mapsto\iota((\phi_{\pi_{p}}(w),|w|^{1/2})).
\]
In other words, $\rho_{\pi,\ell}$ is the Galois representation associated
with the $L$-algebraic automorphic representation $\widetilde{\pi}$
according to Conjecture 3.2.1 of \cite{BG1} (since $w\mapsto(\phi_{\pi_{p}}(w),|w|^{1/2})$
is the Satake parameter of $\widetilde{\pi}_{p}$). We remark that
$\rho_{\pi,\ell}$ depends only on $\pi^{\infty}$ and the $L$-packet
of $\pi_{\infty}$ (but see the remark below). Let $\rho_{E,\pi,\ell}$
denote the restriction of $\rho_{\pi,\ell}$ to $Gal(\overline{\mathbb{Q}}/E)$.
With notation as above, Corollary \ref{cor: main} gives us
\begin{prop}
\label{prop: 6}
Assume that $M_{\psi}$ exists as above. Then

1) $H_{et,\ell}(M_{\psi})=r_{C}\circ\rho_{E,\pi,\ell}$ as representations of $Gal(\overline{\mathbb{Q}}/E)$.

2) $H_{dR,v}(M_{\psi})=r_{C}\circ\widetilde{\phi}_{\mathbb{C}}$ as representations of $\mathbb{C}^{\times}$.
\end{prop}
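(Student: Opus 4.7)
The plan is to reduce both parts to Corollary \ref{cor: main}, combined with the Weil--Deligne and Hodge-theoretic characterizations of $H_{et,\ell}(M_\psi)$ and $H_{dR,v}(M_\psi)$ recalled from Kottwitz in \S\ref{sec:Comparison-with-Kottwitz}.

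Part 2) should be essentially a one-line computation. By Kottwitz's desideratum the action of $\mathbb{C}^\times = W_{\overline{E}_v} = \mathcal{L}_\mathbb{C}$ on $H_{dR,v}(M_\psi)$ is through
\[
(\phi_{E,r_L}|-|^{-d/2})|_{\mathcal{L}_{\overline{E}_v}} = (r_L \circ \phi_\mathbb{C}) \otimes |-|^{-d/2},
\]
and the $v = \mathbb{C}$ case of Corollary \ref{cor: main} identifies this with $r_C \circ \widetilde{\phi}_\mathbb{C}$, which is the claim.

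For part 1), my approach is to compare the two global $Gal(\overline{\mathbb{Q}}/E)$-representations at almost all unramified places and then conclude via Chebotarev density. Fix a finite place $w \nmid \ell\infty$ of $E$ lying over a prime $p$ of $\mathbb{Q}$ at which $\pi_p$ and $G_{\mathbb{Q}_p}$ are unramified. Kottwitz's desideratum gives
\[
WD_\iota\bigl(H_{et,\ell}(M_\psi)|_{Gal(\overline{E}_w/E_w)}\bigr) = (r_L \circ \phi_{E,w}) \otimes |-|^{-d/2},
\]
and Corollary \ref{cor: main}, applied at the prime below $w$ and restricted along $\mathcal{L}_{E_w} \hookrightarrow \mathcal{L}_{\mathbb{Q}_p}$, rewrites the right-hand side as $r_C \circ \widetilde{\phi}_{E,w}$. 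On the other hand, the Buzzard--Gee desideratum recalled just before the proposition says that $\rho_{\pi,\ell}|_{W_{\mathbb{Q}_p}}$ is $\widehat{\widetilde{G}}(\overline{\mathbb{Q}}_\ell)$-conjugate to $\iota \circ \widetilde{\phi}_p$; restricting to $W_{E_w}$ and composing with $r_C$ yields the same Weil--Deligne datum $r_C \circ \widetilde{\phi}_{E,w}$ up to an ambient conjugation in ${\rm Aut}(V_\mu)$ that becomes invisible at the level of isomorphism classes. Chebotarev density, applied to this cofinite set of $w$, should then promote the equality of Frobenius traces to an equality of global $Gal(\overline{\mathbb{Q}}/E)$-representations.

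The main obstacle is the soft but genuine point that Chebotarev only yields equality of semisimplifications. To read the displayed equality literally one must invoke semisimplicity on both sides: purity of $H_{et,\ell}(M_\psi)$ on the motivic side, and Frobenius semisimplicity of $r_C \circ \rho_{E,\pi,\ell}$, which is compatible with the framework of \cite{BG1}. Otherwise the remaining work is routine bookkeeping, namely translating the $\widehat{\widetilde{G}}$-conjugacy in Buzzard--Gee's formulation into an isomorphism of representations via $r_C$ and tracking how the unramified parameter at $w$ is obtained from that at $p$ by restriction of Langlands (equivalently Weil) groups.
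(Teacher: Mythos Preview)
Your approach is aligned with the paper's: both parts are reduced to Corollary \ref{cor: main}, and for Part 2) your one-line computation is exactly what the paper intends. The paper in fact states the proposition as an immediate consequence of Corollary \ref{cor: main}, with no further argument given. For Part 1) you go further than the paper by invoking Chebotarev density and flagging the semisimplification issue; the paper is content to observe that Corollary \ref{cor: main} makes the local desiderata characterising $H_{et,\ell}(M_\psi)$ (Kottwitz) coincide with those for $r_C\circ\rho_{E,\pi,\ell}$ (Buzzard--Gee) at all good places, and treats the resulting ``equality'' informally---the Remark immediately after the proposition explicitly acknowledges this imprecision, noting that $\rho_{\pi,\ell}$ need not even be uniquely determined by $\pi$, and that after composing with $r_C$ the representation is in any case pinned down only at the unramified finite places. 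Your Chebotarev step (together with the semisimplicity hypotheses you identify) is a reasonable way to make the statement literally true, but the paper does not attempt this.
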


\smallskip

\begin{rem*}
1) Assume Langlands functoriality (say in the weak form of Conjecture 6.1.1 of \cite{BG1}). Since $\widetilde{\pi}$ is $L$-algebraic the transfer ${\Pi_{E}}$ of $\widetilde{\pi}$ to $\rm{GL}_{N/E}$ (using $r_{C}$ and base change $E/\mathbb{Q}$ ; here $N=\rm{dim}\,V_{\mu}$) is $L$-algebraic (\cite{BG1} Lemma 6.1.2) with $L$-parameter $r_{C}\circ \widetilde{\phi}_{E}$ and can be taken to be isobaric. Thus one sees from Proposition \ref{prop: 6} that $M_{\psi}$ is the motive conjecturally associated with $\Pi_{E}\otimes |\cdot |^{(1-N)/2} $ by Clozel (\cite{Clo} Conjecture 4.5). 

2) We have been somewhat imprecise in the above since $\rho_{\pi,\ell}$
may not be uniquely determined by $\pi$ (see \cite{BG1} Remark 3.2.4
for a discussion and some examples). One could (or should) instead
speak of all possible $\rho_{\pi,\ell}$ satisfying the desiderata;
the statement that $\rho_{\pi,\ell}$ only depends on $\pi^{\infty}$
and the $L$-packet of $\pi_{\infty}$ should be interpreted in this
way. Of course, after composing with a representation of $^{C}G$,
the composition only depends on $\pi$ at its unramified finite places. 
\end{rem*}

We can now state the minor variations of Conjectures \ref{Kottwitz}
and \ref{Arthur} and Theorem \ref{thm: Arthur} given by Proposition \ref{prop: 6}:

\begin{conjecture}
\label{Conj, modified}The $Gal(\overline{\mathbb{Q}}/E)$-representation
\[
W_{et,\ell}^{\ast}(\pi^{\infty})={\rm Hom}_{G(\mathbb{A}^{\infty})}(\pi^{\infty},H_{et,\ell}^{\ast}(\overline{Sh},\mathcal{F}_{\tau}))
\]
is equal to 
\[
\bigoplus_{\psi\,:\,\pi^{\infty}\in\Pi_{\psi^{\infty}}}(U_{\pi}\otimes(r_{C}\circ\rho_{E,\pi,\ell}))_{\epsilon_{\psi}}.
\]
\end{conjecture}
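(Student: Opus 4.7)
The plan is to derive Conjecture \ref{Conj, modified} as a direct reformulation of Kottwitz's Conjecture \ref{Kottwitz} via Proposition \ref{prop: 6}. I would begin by recording Kottwitz's identification
\[
W_{et,\ell}^{*}(\pi^{\infty}) \;=\; \bigoplus_{\psi\,:\,\pi^{\infty}\in\Pi_{\psi^{\infty}}} (U_{\pi}\otimes H_{et}(M_{\psi}))_{\epsilon_{\psi}}
\]
of $\mathrm{Gal}(\overline{\mathbb{Q}}/E)$-representations, under the hypotheses on $G^{der}$ and $Z(G)$ stated in \ref{Kottwitz}, observing that the only difference between its right-hand side and that of Conjecture \ref{Conj, modified} is the replacement of $H_{et}(M_{\psi})$ by $r_{C}\circ\rho_{E,\pi,\ell}$.

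The core of the argument is then Proposition \ref{prop: 6}(1), which supplies an isomorphism $H_{et,\ell}(M_{\psi}) \cong r_{C}\circ\rho_{E,\pi,\ell}$ of $\mathrm{Gal}(\overline{\mathbb{Q}}/E)$-representations. This identification rests on Corollary \ref{cor: main}, where the Tate twist $|-|^{-d/2}$ in Kottwitz's desiderata for $M_{\psi}$ is shown to be exactly absorbed when one passes from $r_{L}\circ\phi_{E}$ to $r_{C}\circ\widetilde{\phi}_{E}$. Substituting the isomorphism into Kottwitz's formula, factor by factor, produces the right-hand side of the conjecture.

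Before declaring victory I would verify that Proposition \ref{prop: 6}(1) is compatible with the $S_{\psi}$-action, which is what makes the functor $(-)_{\epsilon_{\psi}}$ meaningful on both sides. The $S_{\psi}$-action on $H_{et}(M_{\psi})$ factors through the tautological inclusion $S_{\psi}\hookrightarrow\widehat{G}$ composed with $r_{L}$ (per \cite{Kot1} p.~200), and commutes with the Galois action. Since $r_{C}$ restricted to the subgroup $\widehat{G}\times\{1\}\subseteq\widehat{G}\times\mathrm{GL}_{1}$ pulls back to $r_{L}$, the $S_{\psi}$-action transports across the isomorphism of Proposition \ref{prop: 6}(1), so the $\epsilon_{\psi}$-isotypic part on either side of Kottwitz's formula corresponds.

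The step I expect to be the main obstacle is this last $S_{\psi}$-compatibility check. Although morally clear from the construction of $r_{C}$ as a $\Gamma_{E}$-equivariant extension of $r_{L}$ along the central isogeny $\widehat{G}\times\mathrm{GL}_{1}\to\widehat{\widetilde{G}}$, it requires tracing through the definitions to confirm that Kottwitz's action of $S_{\psi}$ on $V_{\psi}$ is respected by the identification produced in Proposition \ref{prop: 6}(1), rather than only the ambient Galois module structure. Once this is in hand, the remainder of the derivation is purely formal substitution and uses no further input beyond Proposition \ref{prop: 6} and Kottwitz's conjecture.
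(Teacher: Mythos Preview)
Your proposal is correct and matches the paper's approach exactly: the paper presents Conjecture~\ref{Conj, modified} simply as the ``minor variation'' of Kottwitz's Conjecture~\ref{Kottwitz} obtained by substituting Proposition~\ref{prop: 6}(1), with no further argument given. Your additional check of $S_{\psi}$-compatibility is more explicit than anything the paper writes at this point, though the paper does earlier record (just before Conjecture~\ref{Kottwitz}) that the $S_{\psi}$-action commutes with $r_{L}\circ\psi_{E}$, which is the relevant input.
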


\medskip

\begin{thm}
 We have

\[
V_{\psi}=r_{C}\circ\widetilde{\phi}_{\mathbb{C}}
\]
as representations of $\mathbb{C}^{\times}$.
\end{thm}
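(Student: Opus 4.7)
The plan is to deduce this statement by chaining together two results already in hand: Arthur's Theorem \ref{thm: Arthur} on the infinite-place contribution to $L^2$-cohomology, and Corollary \ref{cor: main} applied in the archimedean case $v=\mathbb{C}$. No new machinery is needed; the only care required is in interpreting the right-hand side of Theorem \ref{thm: Arthur} via the local recipe so that the motivic object $M_{\psi}$ (whose existence is not known in general) does not intervene in the final unconditional statement.

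First I would invoke Theorem \ref{thm: Arthur} to identify $V_{\psi}$ with $H_{dR,v}(M_{\psi})$ as representations of $\mathbb{C}^{\times}$, where $v$ is the archimedean place of $E$ corresponding to the fixed embedding $E\hookrightarrow\mathbb{C}$. As emphasized in the remark following Theorem \ref{thm: Arthur}, this equality can and should be read as a purely local statement: $H_{dR,v}(M_{\psi})$ is defined via the desiderata in their local form, namely as $(\phi_{\infty,r_{L}}|-|^{-d/2})|_{\mathcal{L}_{\overline{E}_{v}}}$, and $\mathcal{L}_{\overline{E}_{v}}=\mathcal{L}_{\mathbb{C}}=\mathbb{C}^{\times}$. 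So unconditionally, as $\mathbb{C}^{\times}$-representations,
\[
V_{\psi}=(r_{L}\circ\phi_{\mathbb{C}})\otimes|-|^{-d/2}.
\]

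Next I would apply Corollary \ref{cor: main} in the case $v=\mathbb{C}$, which yields
\[
(r_{L}\circ\phi_{\mathbb{C}})\otimes|-|^{-d/2}=r_{C}\circ\widetilde{\phi}_{\mathbb{C}}
\]
as representations of $\mathcal{L}_{\mathbb{C}}=\mathbb{C}^{\times}$. Combining these two identifications gives the desired equality $V_{\psi}=r_{C}\circ\widetilde{\phi}_{\mathbb{C}}$.

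There is no serious obstacle here; the content of the theorem is precisely the observation that the explicit Tate twist $|-|^{-d/2}$ appearing in Kottwitz's recipe is absorbed into the definition of $r_{C}$ once one passes from $^{L}G$ to $^{C}G$, and the rest is bookkeeping. The only mild subtlety worth spelling out explicitly is that Proposition \ref{prop: 6}(2) as stated in the excerpt is conditional on the existence of $M_{\psi}$, so rather than quoting it I would apply Corollary \ref{cor: main} directly to the local archimedean parameter, which is unconditional; this is exactly why the final theorem above can be stated (and this is also why it is labeled a theorem rather than a proposition conditional on the motivic conjecture).
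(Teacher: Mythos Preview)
Your proposal is correct and follows essentially the same approach as the paper: the theorem is presented there as the ``minor variation'' of Theorem \ref{thm: Arthur} obtained from Proposition \ref{prop: 6}(2), which in turn is exactly Corollary \ref{cor: main} at $v=\mathbb{C}$ applied to the local definition of $H_{dR,v}(M_{\psi})$. Your care in bypassing Proposition \ref{prop: 6} (conditional on the existence of $M_{\psi}$) in favor of the unconditional Corollary \ref{cor: main} plus the local formulation in the remark after Theorem \ref{thm: Arthur} is appropriate and matches how the paper makes the statement unconditional.
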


\medskip

\begin{conjecture}
As representations of $\mathbb{C}^{\times}$ (i.e. Hodge structures)
\[
W_{dR}^{\ast}(\pi^{\infty})={\rm Hom}_{G(\mathbb{A}^{\infty})}(\pi^{\infty},H_{(2)}^{\ast}(Sh,\mathcal{F}_{\tau}))
\]
is equal to 
\[
\bigoplus_{\psi\,:\,\pi^{\infty}\in\Pi_{\psi^{\infty}}}(U_{\pi}\otimes(r_{C}\circ\widetilde{\phi}_{\mathbb{C}}))_{\epsilon_{\psi}}.
\]
\end{conjecture}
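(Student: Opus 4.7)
The plan is to obtain this statement as an immediate repackaging of Conjecture \ref{Arthur}, using Proposition \ref{prop: 6}(2) to rewrite the motivic term $H_{dR,v}(M_{\psi})$ in terms of the $C$-group parameter. No new automorphic or geometric input is required beyond what has been assembled in the previous sections.

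First, I invoke Conjecture \ref{Arthur}, which decomposes $W_{dR}^{\ast}(\pi^{\infty})$ as $\bigoplus_{\psi}(U_{\pi}\otimes H_{dR,v}(M_{\psi}))_{\epsilon_{\psi}}$, with the $\mathbb{C}^{\times}$-action coming from the Hodge structure on each summand $H_{dR,v}(M_{\psi})$. By the desiderata recalled before Conjecture \ref{Kottwitz}, this Hodge structure is $(\phi_{E,r_{L}}|-|^{-d/2})|_{\mathcal{L}_{\overline{E}_{v}}}$, and, as the first remark after Theorem \ref{thm: Arthur} notes, it is intrinsically defined from the local $A$-parameter $\psi_{\infty}$ alone, so does not depend on the conjectural existence of the global motive $M_{\psi}$.

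Next, I apply Proposition \ref{prop: 6}(2), which yields the identification $H_{dR,v}(M_{\psi})=r_{C}\circ\widetilde{\phi}_{\mathbb{C}}$ as representations of $\mathbb{C}^{\times}$; substituting this into the decomposition above produces the claimed equality. The only substantive ingredient is Corollary \ref{cor: main} at $v=\mathbb{C}$, namely $(r_{L}\circ\phi_{\mathbb{C}})\otimes|-|^{-d/2}=r_{C}\circ\widetilde{\phi}_{\mathbb{C}}$, which is precisely what absorbs the Tate twist $|-|^{-d/2}$ in Kottwitz's prescription into the passage from $r_{L}$ to $r_{C}$. There is no real obstacle here: the hard work has already been done in Lemma \ref{lem:easy}, where the central element $e\in\widehat{G}$ is shown to act as $(-1)^{-d}$ on $V_{\mu}$, which is exactly what allows $r\otimes z^{-d}$ on $\widehat{G}\times{\rm GL}_{1}$ to descend to a representation of $\widehat{\widetilde{G}}$ and thus makes $r_{C}$ well-defined in the first place.
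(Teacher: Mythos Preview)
Your proposal is correct and matches the paper's approach exactly: the statement is presented in the paper as one of the ``minor variations of Conjectures \ref{Kottwitz} and \ref{Arthur} and Theorem \ref{thm: Arthur} given by Proposition \ref{prop: 6}'', with no further argument supplied. Your derivation---invoke Conjecture \ref{Arthur}, then substitute $H_{dR,v}(M_{\psi})=r_{C}\circ\widetilde{\phi}_{\mathbb{C}}$ via Proposition \ref{prop: 6}(2), noting that this identification is unconditional since $H_{dR,v}(M_{\psi})$ is defined purely locally from $\psi_{\infty}$---is precisely the intended reformulation.
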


Thus we see that the conjectures of Buzzard and Gee are consistent
in a natural way with the known and conjectural properties of the
cohomology of Shimura varieties, and offers an explanation for the
twist by $-d/2$.
\begin{rem*}
We should remark that another possible way of explaining this Tate
twist is via the theory of weights for perverse sheaves. Namely, it
is natural to consider $\mathcal{F}_{\tau}[d]$ (shift defined by $-[d]^{i}=-^{i+d}$)
because it is a (pure) perverse sheaf. The shift $[d]$ lowers the
weight of the sheaf and its cohomology by $d$, which has the same
effect as a Tate twist by $d/2$ (undoing the Tate twist by $-d/2$
above) in terms of weights.\end{rem*}


\begin{thebibliography}{Kot1}
\bibitem[Art]{Art1} Arthur, J. : Unipotent automorphic representations
I. Conjectures. Ast\'erisque 171\textendash{}172 (1989) 13\textendash{}71.

\bibitem[BR]{BR1} Blasius, D., Rogawski, J. : Zeta-functions of Shimura
varieties. Motives (Seattle, WA, 1991), Proc. Sympos. Pure Math. 55,
AMS, Providence, RI (1994) 525-571.

\bibitem[Bor]{Bor} Borel, A. Automorphic $L$-functions. Automorphic
Forms. Representations and L-functions. Proc. Sympos. Pure Math. 33,
Part II, AMS, Providence, RI (1979) 27-61.

\bibitem[BC]{BoCa} Borel, A., Casselman, W. : $L^{2}$-cohomology
of locally symmetric manifolds of fi{}nite volume. Duke Math. J. 50
(1983), 625-647.

\bibitem[BG]{BG1} Buzzard K. M., Gee, T. : The conjectural connections
between automorphic representations and Galois representations. To
appear in Proceedings of the LMS Durham Symposium 2011. Available
at http://www2.imperial.ac.uk/\textasciitilde{}tsg/

\bibitem[Clo]{Clo} Clozel, L. Motifs et formes automorphes: applications
du principe de fonctorialit\'e. Automorphic forms, Shimura varieties,
and L-functions, Proceedings of the Ann Arbour conference, Perspect.
Math. 10, Academic Press, Boston, MA (1990) 77-159.

\bibitem[Del]{Del1} Deligne, P. : Vari\'et\'es de Shimura : interpr\'etation
modulaire, et techniques de construction de mod\`eles canoniques. Automorphic
forms, representations and L-functions, Proc. Sympos. Pure Math. 33,
Part II, AMS, Providence, RI (1979) 247-290.

\bibitem[Kot]{Kot1} Kottwitz, R. : Shimura varieties and $\lambda$-adic
representations. Automorphic forms, Shimura varieties and L-functions,
Proceedings of the Ann Arbour conference, Perspect. Math. 10, Academic
Press, Boston, MA (1990) 161-209.

\bibitem[Lan]{Lan1} Langlands, R. P. : Automorphic representations,
motives and Shimura varieties. Ein M\"archen. Automorphic Forms. Representations
and L-functions. Proc. Sympos. Pure Math. 33, Part II, AMS, Providence,
RI (1979) 205-246.

\bibitem[Mil]{Mil1} Milne, J. S. : Introduction to Shimura varieties.
Harmonic Analysis, the Trace Formula and Shimura Varieties, AMS, Providence,
RI (2005) 265-378. Also available at http://www.jmilne.org/math/articles/

\bibitem[Pin]{Pin1} Pink, R. : Arithmetical compactifi{}cation of
mixed Shimura varieties, Ph.D. Thesis, Bonner Mathematische Schriften
209 (1989)

\bibitem[VZ]{VZ} Vogan, D., Zuckerman, G. : Unitary representations with non-zero cohomology, Compositio Math. no. 53 (1984) 51-90

\end{thebibliography}
\end{document}